\theoremstyle{plain}
\newtheorem{theorem}{Theorem}[section]
\newtheorem{lemma}[theorem]{Lemma}
\newtheorem{corollary}[theorem]{Corollary}
\newtheorem{proposition}[theorem]{Proposition}
\theoremstyle{definition}
\theoremstyle{remark}
\newcommand{\abs}[1]{\left|#1\right|}
\begin{document}

\title[]
      {Effective rigidity away from the boundary for centrally-symmetric billiards}

\date{25 September 2022}
\author{Misha Bialy}
\address{School of Mathematical Sciences, Raymond and Beverly Sackler Faculty of Exact Sciences, Tel Aviv University,
Israel} 
\email{bialy@tauex.tau.ac.il}
\thanks{MB was partially supported by ISF grant 580/20 and DFG grant MA-2565/7-1  within the Middle East Collaboration Program.}


\begin{abstract} In this paper we study centrally symmetric Birkhoff billiard tables. We introduce a closed invariant set $\mathcal{M}_\mathcal{B}$ consisting of locally maximizing orbits of the billiard map lying inside the region $\mathcal{B}$ bounded by two invariant curves of $4$-periodic orbits.  We give an effective bound from above on the measure of this invariant set in terms of the isoperimetric defect of the curve. The equality case occurs if and only if the curve is a circle. 
\end{abstract}

\maketitle


\section{Introduction}
In this paper we study Birkhoff billiards for centrally symmetric $C^2$-smooth convex curves in the plane. We introduce the set
$\mathcal{M}_{\mathcal B}$ lying in the region $\mathcal{B}$ between two invariant curves $\alpha, \bar{\alpha}$ in the phase space (see Figure \ref{B}). The set $\mathcal{M}_{\mathcal B}$, by definition,  consists of those orbits such that any finite sub-segment is locally maximizing, for the length functional $\mathcal{L}$ associated to the billiard table. We assume that $\alpha, \bar{\alpha}$ consist of $4$-periodic orbits of rotation numbers $1/4$ and $3/4$ respectively. It then follows that the set $\mathcal{M}_\mathcal{B}$ is a closed set which is invariant under the billiard map $T$.
\begin{figure}[h]\label{B}
	\centering
	\includegraphics[width=0.4\linewidth]{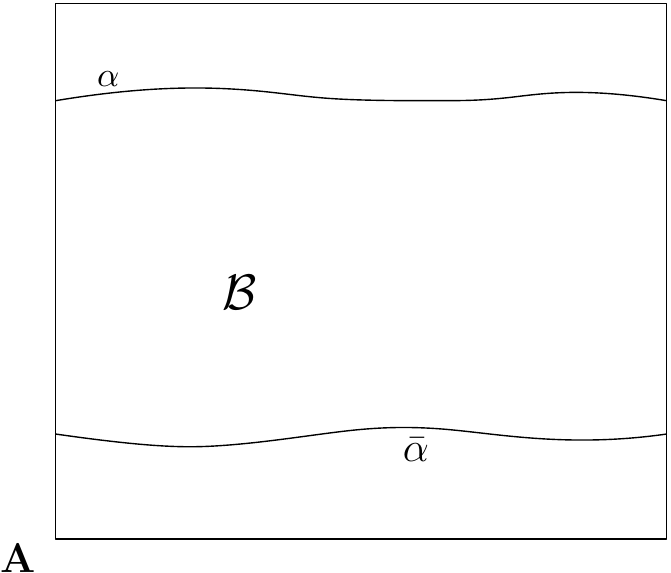}
	\caption{The region $\mathcal B$}
\end{figure}
Our goal in this paper is to get an upper bound on the measure of the set $\mathcal{M}_\mathcal B$ which is sharp, i.e. the case when  $\mathcal{M}_\mathcal B$ occupies the whole of $\mathcal{B}$ occurs if and only if the billiard table is circular. Thus we show that the measure of the complement set $\Delta_\mathcal B:=\mathcal B\setminus\mathcal M_\mathcal B$ can be estimated from below in terms of the isoperimetric defect of the billiard domain.

 These bounds are of obvious importance for classical dynamics (and probably also for quantum properties), because all "rotational" invariant curves, as well as
Aubry-Mather sets, are filled by orbits which are locally length maximizing  (we refer to the monographs \cite{Tab}\cite{bangert}\cite{siburg}\cite{KT} for background material).

Estimates of this type  were obtained previously in \cite{B-effective} \cite{Bialy-Tsodikovich}, as an effective version of the so called E.Hopf rigidity phenomenon for billiards. 

The estimate presented here is related to the recent progress in the 
Birkhoff conjecture \cite{BM-annals} for centrally symmetric billiard tables. Similarly to \cite{BM-annals} we consider here the class of $C^2$-billiard tables having invariant curve consisting of $4$-periodic orbits and use its properties. We refer here to papers \cite{AB}\cite{AKS}\cite{K-S}\cite{KS2}\cite{G} for other powerful recent approaches. However, the main novelty of the present paper is that the region $\mathcal B$ lies away from the boundary of the phase cylinder.

It is an open question how to remove the restriction of central symmetry
of the billiard table. It is also interesting if effective bounds can be found for a region between two arbitrary invariant curves in the phase space. We now turn to the needed background and the formulation of the main result.

Let $\gamma $ be a $C^2$-smooth simple closed convex curve of positive curvature in $\mathbf R^2$. 
We fix the counterclockwise orientation on $\gamma$. We shall use the arclength parametrization $s$ as well as the parametrization by 
the angle $\psi$ formed by the outer unit normal $n$ to $\gamma$ with a fixed direction. These two parametrizations are related by $d\psi=k(s) ds$, where $k(s)$
is the curvature at the point $\gamma(s)$.

The natural phase space of the Birkhoff billiard inside $\gamma$ is the space $\mathbf A$ of all oriented lines that  intersect $\gamma$.
This space is topologically  a cylinder and we shall refer to it as the phase cylinder of $T$.
The billiard map $T$ acts on $\mathbf A$ by the reflection law in $\gamma$.
The phase cylinder carries a natural symplectic structure that can be described as follows.

Each oriented line is identified with the pair $(\cos\delta,s), \  \delta\in(0,\pi)$, where $\gamma(s)$ is the incoming point and $\delta$ is the angle between the line and the tangent $\gamma'(s)$. In these coordinates the symplectic form is 
$d\lambda$, where $\lambda=\cos\delta\, ds$, where $\cos\delta$ plays the role of momentum variable.
We shall denote by $\mu$ the corresponding invariant measure on the phase space $\mathbf{A}$.
The billiard map $T$ is a symplectic map and the chord length $\quad L(s,s_1)=|\gamma(s)-\gamma(s_1)|$ is a generating function of $T$ (see Figure \ref{fig:L}) 
Namely, $$ T^*\lambda-\lambda=\cos\delta_1\,ds_1-\cos\delta\, ds=dL.
$$

\begin{figure}[h]
	\centering
	\includegraphics[width=0.6\textwidth]{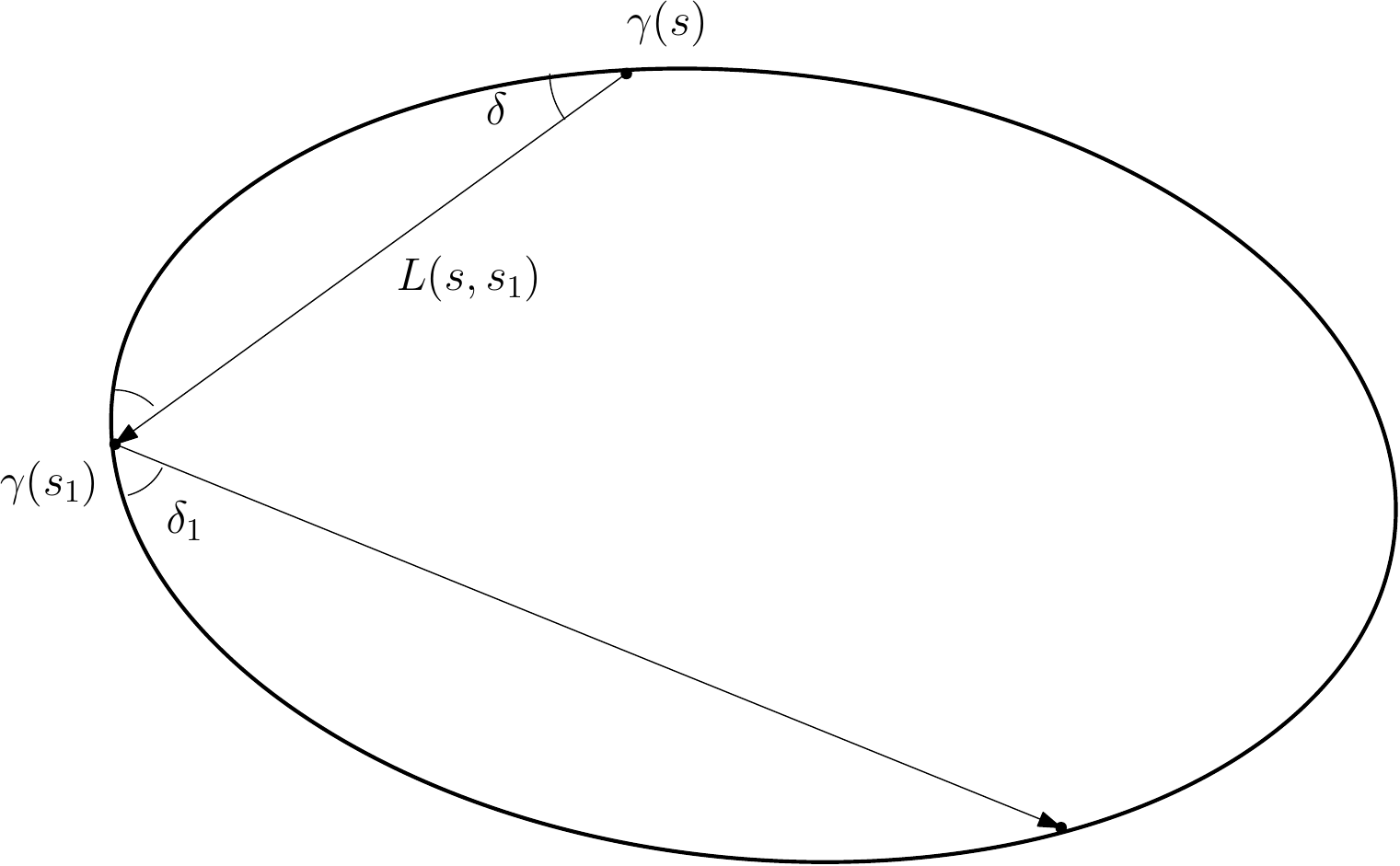}
	\caption{Generating function $L$ corresponding to the 1-form $\lambda$.}
	\label{fig:L}
\end{figure}
 Moreover, one can check that $T$ satisfies the twist condition:
\begin{equation}\label{eq:twist}
	L_{12} (s,s_1) >0,
\end{equation}
meaning that $T$ is a {\it negative} twist symplectic map (here and below we use subindex $1$ and/or $2$ for the partial derivative with respect to the first or the second argument respectively).
 
{\bf Remark.} Let me remark that traditionally the generating function is the negative of ours, i.e.  the negative chord length. However we prefer, for convenience, sign $+$ for the generating function and hence the twist condition (\ref{eq:twist}) for the billiard map. Consequently we deal with maximizing (and not minimizing) orbits. 

For the generating function $L$ we can naturally define the variational principle as follows. For the configuration sequence $\{s_n\}$ we associate the formal sum \[\mathcal L \{s_n\}=\sum_n L(s_n,s_{n+1}).\] Configurations $\{s_n\}$, corresponding to a billiard trajectories  are critical points of the functional $\mathcal L$.

We shall consider {\it locally maximizing} configurations, that is, those configurations which give {\it local maximum}
for the functional between any two end-points.
We shall call such configurations, \textit{m-configurations}, and the corresponding orbits on the phase cylinder $\mathbf{A}$, \textit{m-orbits}.
We denote by $\mathcal M\subset\mathbf{A}$ the set swept by all m-orbits corresponding to the variational principle for the generating function $L$. We shall also use the following notations
$$
\mathcal M_\mathcal B:=\mathcal M \cap \mathcal B,\quad \Delta_\mathcal B:=\mathcal B\setminus\mathcal M_\mathcal B
$$
Let $\gamma\subset\mathbf{R}^2$ be a $C^2$-smooth, centrally symmetric, convex closed curve of positive curvature. We shall assume that the billiard map corresponding to $\gamma$ has a rotational (=winding once around the cylinder and simple) invariant curve $\alpha\subset \mathbf{A}$ consisting of $4$-periodic orbits.
We shall denote by $\bar\alpha$ the corresponding invariant curve of 
rotation number $\frac{3}{4}$. This curve consists of the same billiard trajectories but with the reversed orientation of the lines.
Our main result is the following:
\begin{theorem}\label{main1}
	Suppose that the billiard ball map $T$ of $\gamma$  has a continuous rotational  invariant curve $\alpha\subset \mathbf A$  of rotation number $1/4$, consisting of $4$-periodic orbits. Let $\bar\alpha$ be the corresponding invariant curve of 
	rotation number $\frac{3}{4}$.
	Let  $\mathcal B\subset \mathbf A$ be the domain between the curves $\alpha$ and $\bar\alpha$ (see Figure \ref{B}).
	Then the following estimate holds:
	\begin{equation}
\frac{3\beta}{16}(P^2-4\pi A)\leq \mu(\Delta_\mathcal B),
	\end{equation} 
where $P$,$A$ denote the perimeter and the area of $\gamma$, and $\beta>0$ is the minimal curvature of $\gamma$.
	\end{theorem}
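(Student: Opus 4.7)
The strategy is a Hopf-type argument, adapting the effective rigidity approach of \cite{B-effective,Bialy-Tsodikovich} to the invariant region $\mathcal M_{\mathcal B}$ cut out by the two $4$-periodic invariant curves. To each $x=(s,\delta)\in\mathcal M_{\mathcal B}$ I would associate the stable Green bundle of the m-orbit through $x$: locally length-maximizing orbits admit a positive Jacobi field, giving a measurable outgoing focal-distance function $b^s(x)\in(0,L(x))$, where $L(x)=L(s,s_1)$ is the forward chord length. The mirror equation at the next bounce $Tx=(s_1,\delta_1)$ reads
\[
\frac{1}{L(x)-b^s(x)}+\frac{1}{b^s(Tx)}=\frac{2k(s_1)}{\sin\delta_1},
\]
and the arithmetic--harmonic mean inequality applied to the left-hand side yields the pointwise bound $\bigl(L(x)-b^s(x)\bigr)+b^s(Tx)\geq 2\sin\delta_1/k(s_1)$.

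Integrating this pointwise inequality against the $T$-invariant measure $\mu$ over the $T$-invariant set $\mathcal M_{\mathcal B}$, the contributions from $b^s$ and $b^s\circ T$ cancel by invariance, leaving
\[
\int_{\mathcal M_{\mathcal B}}L\,d\mu\;\geq\;2\int_{\mathcal M_{\mathcal B}}\frac{\sin\delta}{k(s)}\,d\mu.
\]
Splitting $\mathcal B=\mathcal M_{\mathcal B}\sqcup\Delta_{\mathcal B}$ and rearranging,
\[
\int_{\mathcal B}\Bigl(\frac{2\sin\delta}{k(s)}-L\Bigr)\,d\mu\;\leq\;\int_{\Delta_{\mathcal B}}\Bigl(\frac{2\sin\delta}{k(s)}-L\Bigr)\,d\mu\;\leq\;\frac{2}{\beta}\,\mu(\Delta_{\mathcal B}),
\]
using $L\ge 0$ and $2\sin\delta/k(s)\le 2/\beta$. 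Consequently $\mu(\Delta_{\mathcal B})\ge \tfrac{\beta}{2}\int_{\mathcal B}\bigl(\tfrac{2\sin\delta}{k(s)}-L\bigr)\,d\mu$, and the problem is reduced to the purely geometric inequality
\[
\int_{\mathcal B}\Bigl(\frac{2\sin\delta}{k(s)}-L\Bigr)\,d\mu\;\geq\;\frac{3}{8}\bigl(P^2-4\pi A\bigr),
\]
which will give the theorem with constant $\tfrac{\beta}{2}\cdot\tfrac{3}{8}=\tfrac{3\beta}{16}$.

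To prove this last inequality I would pass to the support function $h(\psi)$ of $\gamma$. The Santal\'o identity gives $\int_{\mathbf A}L\,d\mu=2\pi A$, and a direct computation yields $\int_{\mathbf A}\tfrac{2\sin\delta}{k(s)}\,d\mu=\pi\int_0^{2\pi}\rho(\psi)^2\,d\psi$ with $\rho=h+h''$. Central symmetry forces $h$ to have only even Fourier modes $n\geq 2$, and a Parseval computation matches $\int_{\mathbf A}(\tfrac{2\sin\delta}{k(s)}-L)\,d\mu=\pi^2\sum_{n\ge 2} n^2(n^2-1)(a_n^2+b_n^2)$ against $P^2-4\pi A=2\pi^2\sum_{n\ge 2}(n^2-1)(a_n^2+b_n^2)$, showing $\int_{\mathbf A}(\tfrac{2\sin\delta}{k(s)}-L)\,d\mu\geq 2(P^2-4\pi A)$ on the full phase cylinder.

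The main obstacle is the truncation from $\mathbf A$ to $\mathcal B$: the cut-off curve $\delta=\delta_\alpha(s)$ is only implicitly determined by the parallelogram closure condition for $4$-periodic orbits in the centrally symmetric table (whose two side-lengths are $|\gamma(s_0)\pm\gamma(s_1)|$), so controlling $\int_{\mathbf A\setminus\mathcal B}(\tfrac{2\sin\delta}{k(s)}-L)\,d\mu$ against $P^2-4\pi A$ with the sharp constant requires an explicit expansion in $h$ or a delicate variational/monotonicity argument relating $\delta_\alpha$ to the $n=2$ Fourier mode, which simultaneously governs the isoperimetric defect and the deviation of $\delta_\alpha$ from the circular value $\pi/4$. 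Equality throughout --- in the AM--HM inequality (where $a^s=b^s=L/2$ at each bounce), in the Parseval matching, and in the isoperimetric inequality --- characterises the circle, consistent with the sharpness claimed in the theorem.
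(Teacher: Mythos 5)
Your setup is sound and runs structurally parallel to the paper's: a Hopf-type pointwise inequality on $\mathcal M_{\mathcal B}$, integration against the invariant measure so that the transported terms cancel, the splitting $\mathcal B=\mathcal M_{\mathcal B}\sqcup\Delta_{\mathcal B}$, the crude bound $2/\beta$ for the integrand on $\Delta_{\mathcal B}$ (via Blaschke/minimal curvature), and a reduction of the theorem to a purely geometric lower bound of the form $\int_{\mathcal B}(\cdots)\,d\mu\geq\frac{3}{8}(P^2-4\pi A)$. You carry this out with the chord-length generating function and the mirror equation (the route of \cite{B0,B-effective}), whereas the paper uses the non-standard generating function $S(\varphi,\varphi_1)=2h(\psi)\sin\delta$ and a function $\omega$ satisfying $\omega\circ T-\omega\geq S_{11}+2S_{12}+S_{22}=2h''(\psi)\sin\delta$; these are equivalent in spirit, and the coincidence of the two classes of locally maximizing orbits is covered by the criterion of \cite{Bialy-Tsodikovich}, so that part is acceptable. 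Your full-cylinder Parseval computation is also correct as far as it goes.

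The genuine gap is precisely the step you yourself flag as ``the main obstacle'': the lower bound over the truncated region $\mathcal B$ (as opposed to all of $\mathbf A$) is never proved, and that is where essentially all of the paper's work lies (its Theorem \ref{thm:I-below} and the whole of the final section). Nothing in your argument up to that point uses the hypothesis that $\alpha$ consists of $4$-periodic orbits, without which the claimed inequality over $\mathcal B$ has no reason to hold with the stated constant. The paper's resolution hinges on the rigid structure of that invariant curve: writing $\alpha=\{\delta=d(\psi)\}$, Theorem \ref{4-periodic}(D) and Corollary \ref{relations1} give $h(\psi)=R\sin d(\psi)$, $h(\psi+\tfrac{\pi}{2})=R\cos d(\psi)$, so that $\mathcal B=\{d(\psi)\leq\delta\leq\pi-d(\psi)\}$ and the whole integral becomes an explicit functional of the single function $d$. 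This is also why the paper works in the $(p,\varphi)$ chart: the integrand $2h''(\psi)\sin\delta$ and the measure $\rho(\psi)\sin\delta\,d\psi\,d\delta$ separate in $(\psi,\delta)$, the $\delta$-integration over $[d(\psi),\pi-d(\psi)]$ is elementary, and a subsequent symmetrization $\psi\mapsto\psi+\tfrac{\pi}{2}$, integration by parts, the Wirtinger inequality applied to $Y=d'\sqrt{f}$, and a positivity lemma reduce everything to $\int h'^2\,d\psi$ and hence to the isoperimetric defect. In your coordinates the integrand $\frac{2\sin\delta}{k(s)}-L(s,\delta)$ does not separate and the curve $\alpha$ has no comparably explicit description, so the truncation step would have to be built from scratch. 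As written, the proposal reduces the theorem to an unproved inequality that carries the full difficulty of the result.
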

Sharp estimates for $\mathcal M$ were obtained first in \cite {B-effective} and then in \cite{Bialy-Tsodikovich} as quantitative version of the so called E.Hopf rigidity phenomenon for billiards discovered in \cite{B0} and then \cite{B1}\cite{W}. In the paper \cite{Bialy-Tsodikovich} the region between the invariant curve $\alpha$ and the boundary of the phase cylinder was considered, while in the present paper the  region $\mathcal B$ lies away from the boundary. The significance of the invariant curve of 4-$periodic$ orbits was first understood in \cite{BM-annals}, and we shall remind the properties of this curve here and use them below.

Here are some useful corollaries of Theorem \ref{main1}:
\begin{corollary}\label{circle}
	Set $\mathcal M_\mathcal B$ of locally maximizing orbits occupies the whole region $\mathcal B$ if and only $\gamma$ is a circle.
\end{corollary}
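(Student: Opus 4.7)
The corollary is essentially the equality case of Theorem \ref{main1}, combined with the classical isoperimetric inequality, so the plan is to separate the two directions and reduce each one to a named result.

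For the easy direction ($\mathcal{M}_\mathcal{B}=\mathcal{B}$ implies $\gamma$ a circle), I would argue as follows. If $\mathcal{M}_\mathcal{B}$ fills $\mathcal{B}$, then $\Delta_\mathcal{B}=\emptyset$, so $\mu(\Delta_\mathcal{B})=0$. Plugging this into the inequality of Theorem \ref{main1} gives
\[
\tfrac{3\beta}{16}(P^2-4\pi A)\le 0.
\]
Since $\gamma$ has positive curvature, $\beta>0$, so $P^2-4\pi A\le 0$. On the other hand, the classical isoperimetric inequality in the plane yields $P^2-4\pi A\ge 0$, with equality only for a circle. Therefore $P^2=4\pi A$ and $\gamma$ must be a circle.

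For the converse (circle implies $\mathcal{M}_\mathcal{B}=\mathcal{B}$), the idea is to invoke integrability. When $\gamma$ is a circle the billiard map is conjugate to a rigid rotation on each level of the angle $\delta$, so the entire phase cylinder $\mathbf A$ is foliated by rotational invariant curves $\{\delta=\text{const}\}$. A standard result in Aubry--Mather theory for monotone twist maps (here negative twist, with the sign convention $L_{12}>0$ adopted in the paper) asserts that any orbit lying on a rotational invariant curve is globally maximizing for the action $\mathcal L$ between any two of its configurations, hence in particular locally maximizing. Thus every point of $\mathbf A$ lies in $\mathcal M$, and \emph{a fortiori} $\mathcal{M}_\mathcal{B}=\mathcal{B}$.

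There is no real obstacle: the only thing that requires care is making sure the orientation/sign convention of the paper (maximizing orbits, $L_{12}>0$) is correctly matched with the variational characterization of invariant curves, but this has already been set up in the introduction. The substantive content is entirely contained in Theorem \ref{main1} and the isoperimetric inequality; the corollary itself is a one-line deduction from each.
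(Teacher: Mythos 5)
Your proof is correct and is essentially the argument the paper intends: the forward direction is exactly the combination of Theorem \ref{main1} with the classical isoperimetric inequality, and the converse is the standard fact (noted in the paper's introduction) that orbits on rotational invariant curves are locally maximizing, which for the circle foliates the whole phase cylinder. No issues.
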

In fact one can reformulate Corollary \ref{circle} in a dynamical way:
\begin{corollary}\label{main2}
Suppose that the  restriction of billiard map $T$ to $\mathcal B$ has an invariant measurable field of non-vertical oriented lines, with the orientation chosen on the lines coherently by the condition $ds>0$. Then $\gamma $ is a circle.
\end{corollary}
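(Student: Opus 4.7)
The plan is to reduce Corollary \ref{main2} to Corollary \ref{circle} by showing that a $T$-invariant measurable non-vertical oriented line field on $\mathcal B$ forces $\mathcal M_\mathcal B$ to fill $\mathcal B$ up to a set of $\mu$-measure zero. Since Corollary \ref{circle} is itself a direct consequence of Theorem \ref{main1}, the actual content is this implication, which is a measurable incarnation of the classical E.~Hopf rigidity mechanism for negative-twist maps.

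Concretely, I would first encode the line field as a measurable function. At $\mu$-a.e.\ $p=(s,\cos\delta)\in\mathcal B$ the assigned one-dimensional subspace of $T_p\mathbf A$ is transverse to $\partial/\partial(\cos\delta)$, hence is spanned by a unique vector $\partial_s+h(p)\,\partial/\partial(\cos\delta)$ with $h:\mathcal B\to\mathbf R$ measurable; the orientation rule $ds>0$ fixes this representative unambiguously. The $T$-invariance of the line field translates, via the generating function $L(s,s_1)$, into a cocycle identity for $h$ along orbits. Along any orbit $\{s_n\}$ this identity is precisely the one governing consecutive ratios $u_{n+1}/u_n$ of a strictly positive solution of the linearized Euler--Lagrange (Jacobi) recursion associated to the second variation of $\mathcal L$.

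The next step is to pass from the existence of this positive Jacobi sequence $\{u_n\}$ to local length-maximality. The Hessian of $\sum_{k=m}^{n-1}L(s_k,s_{k+1})$ at a critical configuration is a tridiagonal matrix whose sub- and super-diagonals are the values $L_{12}(s_k,s_{k+1})>0$ granted by the twist condition (\ref{eq:twist}). A standard Sturm/Jacobi comparison for such matrices equates the existence of a strictly positive solution of the recursion on the whole segment with negative-definiteness of the Hessian, i.e.\ with strict local maximality. Hence every finite subsegment of our orbit is locally length-maximizing and the orbit is an m-orbit. The set of $p\in\mathcal B$ whose orbit is an m-orbit therefore has full $\mu$-measure; since $\mathcal M_\mathcal B$ is closed in $\mathcal B$, this forces $\mu(\Delta_\mathcal B)=0$, and Theorem \ref{main1} yields $\frac{3\beta}{16}(P^2-4\pi A)\leq 0$. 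Positivity of $\beta$ leaves only $P^2=4\pi A$, and the isoperimetric inequality produces the circle.

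The main obstacle I anticipate is the measurability bookkeeping in the first two steps: one must invoke $T$-invariance of $\mu$ (hence Fubini on $T$-invariant subsets and Poincar\'e recurrence) to ensure that, along $\mu$-almost every orbit, $h$ is defined and finite at every iterate $T^n p$, so that non-verticality and positivity of the Jacobi sequence are not destroyed by visiting a null set where the measurable field happens to be undefined. After that, the translation from a positive Jacobi field to negative-definiteness of the variational Hessian is standard in the negative-twist variational calculus and requires no new ingredients beyond what is recalled in the introduction.
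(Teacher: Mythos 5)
Your argument is correct and is essentially the paper's own: the paper deduces Corollary \ref{main2} from Theorem \ref{main1} by invoking the criterion of local maximality in terms of Jacobi fields (Theorem 1.1 of \cite{Bialy-Tsodikovich}), which is exactly the implication you reprove by hand (non-vertical invariant line field with $ds>0$ $\Rightarrow$ positive Jacobi field $\Rightarrow$ negative-definite second variation via the discrete Picone/Sturm identity $\Rightarrow$ $\mu(\Delta_\mathcal B)=0$). Your additional measurability bookkeeping via intersecting the full-measure sets $T^{-n}E$ is sound (Poincar\'e recurrence is not actually needed), and the final step through Theorem \ref{main1} and the isoperimetric inequality matches the paper.
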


This is especially useful in establishing the following geometric fact.
\begin{corollary}\label{cor:vertical}
	If $\gamma$ is not a circle, then there always exist a point $x\in\mathcal B$ and a vertical tangent vector $v\in T_x\mathcal B$ such that for some positive integer $n$, the vector $DT^n(v)$ is again vertical (this  exactly means that the points $x$ and $T^n x$ are conjugate). 
\end{corollary}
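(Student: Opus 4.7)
The plan is to argue by contradiction, reducing the statement to Corollary \ref{main2}. Suppose that $\gamma$ is not a circle, yet no pair $(x,n)$ with $x \in \mathcal{B}$ and $n \geq 1$ satisfies the stated conjugacy condition: for every such $(x,n)$, $DT^n V_x$ is non-vertical, where $V_y \subset T_y\mathbf{A}$ denotes the vertical subspace. Equivalently, no two points on any orbit lying inside $\mathcal{B}$ are conjugate.

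Under this assumption I would construct the positive Green bundle on $\mathcal{B}$. For each $x \in \mathcal{B}$ and $n \geq 1$, set $L_n(x) := DT^n V_{T^{-n}x}$. Since $\mathcal{B}$ is $T$-invariant, we have $T^{-n}x \in \mathcal{B}$, so by hypothesis each $L_n(x)$ is non-vertical. The classical Green bundle theory for twist symplectic maps (see e.g.\ \cite{bangert}) then asserts that, in the absence of conjugate points, the lines $L_n(x)$ evolve monotonically in the projective space of lines through $x$, driven by the Riccati-type iteration coming from the twist condition \eqref{eq:twist}, and converge to a non-vertical limit $E^+(x)$. The resulting distribution $E^+$ is Borel measurable and $DT$-invariant: $DT\,E^+(x) = E^+(Tx)$.

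Orienting $E^+(x)$ pointwise by the condition $ds > 0$ yields an invariant measurable field of oriented non-vertical lines on $\mathcal{B}$; any subtlety about whether $DT$ preserves or reverses this pointwise orientation can be settled either by examining the limiting behavior as $x$ approaches the boundary curve $\alpha$ (where $E^+$ coincides with $T\alpha$ and $T$ acts as an orientation-preserving rotation by $1/4$) or, failing that, by applying Corollary \ref{main2} to $T^2$, for which $\mathcal{B}$ remains invariant and orientation preservation is automatic. In either case Corollary \ref{main2} forces $\gamma$ to be a circle, contradicting the standing hypothesis.

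The main obstacle in this plan is the Green bundle construction in the open region $\mathcal{B}$: one must verify the monotone convergence of $L_n(x)$ throughout $\mathcal{B}$ from the sole hypothesis of no conjugate pairs along orbits, rather than from the more familiar setting of a single KAM curve. This hinges essentially on the twist condition \eqref{eq:twist}. The coherent orientation by $ds > 0$ is a secondary but genuine technical matter, handled as sketched above.
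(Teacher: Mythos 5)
Your strategy is sound and does reach the conclusion, but by a genuinely different route from the paper. The paper argues more softly: under the no-conjugate-point assumption every finite second-variation matrix $\delta^2\mathcal L_{MN}$ along an orbit segment in $\mathcal B$ is non-degenerate; deforming such a segment within $\overline{\mathcal B}$ to a segment of a $4$-periodic orbit on $\alpha$, where the form is negative definite, and using that the signature cannot jump while non-degeneracy persists, one gets that \emph{all} orbits in $\mathcal B$ are locally maximizing, i.e.\ $\mathcal M_\mathcal B=\mathcal B$ and $\mu(\Delta_\mathcal B)=0$, whence Theorem \ref{main1} forces $P^2=4\pi A$ and $\gamma$ is a circle. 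You instead build the Green bundle $E^+(x)=\lim_n DT^nV_{T^{-n}x}$ and feed it into Corollary \ref{main2}. This works: the monotonicity and boundedness of the slopes of $L_n(x)$ (sandwiched between $L_1(x)$ and $DT^{-1}V_{Tx}$) is the standard no-conjugate-points argument for twist maps, and measurability and $DT$-invariance follow as you say. It is, however, heavier machinery for the same end, since Corollary \ref{main2} is itself deduced from Theorem \ref{main1} via the Jacobi-field criterion of \cite{Bialy-Tsodikovich}; what your version buys is independence from the special geometry of $\alpha$ (the paper's deformation argument leans on $\alpha$ consisting of maximizing periodic orbits bounding $\mathcal B$), so it would transfer to any invariant region for which an analogue of Corollary \ref{main2} holds.

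One correction concerning the orientation. The coherence of the $ds>0$ orientation is not a matter to be patched afterwards: it is already contained in your construction. The line $E^+(x)$ carries the Jacobi field $\delta\varphi=\lim_n\delta\varphi^{(n)}$, where $\delta\varphi^{(n)}$ vanishes at time $-n$ and is normalized at time $0$; absence of conjugate points together with the twist condition makes each $\delta\varphi^{(n)}$ positive for times $>-n$, so the limit is $\geq 0$, and the three-term Jacobi recursion \eqref{eq:Jacobi} with $b_n>0$ shows it cannot vanish anywhere without vanishing identically. A strictly positive invariant Jacobi field is exactly the coherently oriented field Corollary \ref{main2} requires. By contrast, neither of your proposed fallbacks is solid: continuity of $E^+$ up to $\alpha$ is not available (Green bundles are in general only measurable), and passing to $T^2$ does not make orientation preservation automatic, since the sign with which $DT$ acts on the oriented line field need not be constant along orbits and Corollary \ref{main2} is in any case a statement about $T$. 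Replace that discussion with the positivity argument above and the proof is complete.
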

Corollary \ref{main2} follows immediately from Theorem \ref{main1} applying the Criterion of local maximality in terms of Jacobi fields  (Theorem 1.1) of \cite{Bialy-Tsodikovich}. 

In order to deduce Corollary \ref{cor:vertical} one can argue analogously to \cite{B0}.
More precisely, suppose by contradiction, that for any vertical  vector $v\in T_x\mathcal B$ and any integer $n$, the vector $DT^n(v)$ is not vertical. This implies that any finite segment of a billiard trajectory $\{\gamma(s_n), n\in[M,N]\}$ has a non-degenerate matrix of second variation $\delta ^2\mathcal L_{MN}$.
Then, by a continuity argument, all the matrices $\delta ^2\mathcal L_{MN}$ must be negative definite (because this holds true for orbits lying on the rotational invariant curve $\alpha$). Hence all billiard  configurations, corresponding to the orbits lying in $\mathcal{B}$ are locally maximizing. Therefore, Theorem \ref{main1} applies and the curve $\gamma$ is a circle. Contradiction. 

\section*{Acknowledgements}
I am thankful to anonymous referees for careful reading 
and improving corrections.

\section{\bf Important tools}  
\subsection{Non-standard generating function}
Another way to get the same symplectic form is to fix an origin in $\mathbf R^2$ (we shall fix the origin at the center of $\gamma$) 
and to introduce the coordinates $(p,\varphi)$ on the space of all oriented lines, so that $\varphi$ is the angle between the right unit normal to the line and the horizontal and $p$ is the signed distance to the line (see Fig. \ref{fig:S}). In this way the space of oriented lines is identified with $T^*S^1$. Moreover,  the standard symplectic form  $d\beta$\ with $\beta=pd\varphi$
coincides with the symplectic form described before. In this description $p$ plays the role of momentum variable. 

For the second choice of the coordinates $(p,\varphi)$, the generating function was found first in \cite{BM} for the 2-dimensional case and then in \cite{B}  for higher dimensions (see \cite{BT} for further applications). This function $S$ is determined by the formulas:
$$
T^*\beta-\beta=p_1\,d \varphi_1-p\,d\varphi = dS, \quad S(\varphi ,\varphi _1 )=2h(\psi)\sin\delta,
$$
where $$\psi:=\frac{\varphi_1+\varphi}{2},\quad \delta:=\frac{\varphi_1-\varphi}{2}.$$
Here and  throughout this paper  we denote by $h$ the support function of $\gamma$ with respect to $0$:
$$
h(\psi):=\max_\gamma \left\langle\gamma,n_\psi \right\rangle,
$$
where $n_\psi$ is the unit outer normal to $\gamma$ in the direction $\psi$.
\begin{figure}[h]
	\centering
	\includegraphics[width=0.8\textwidth]{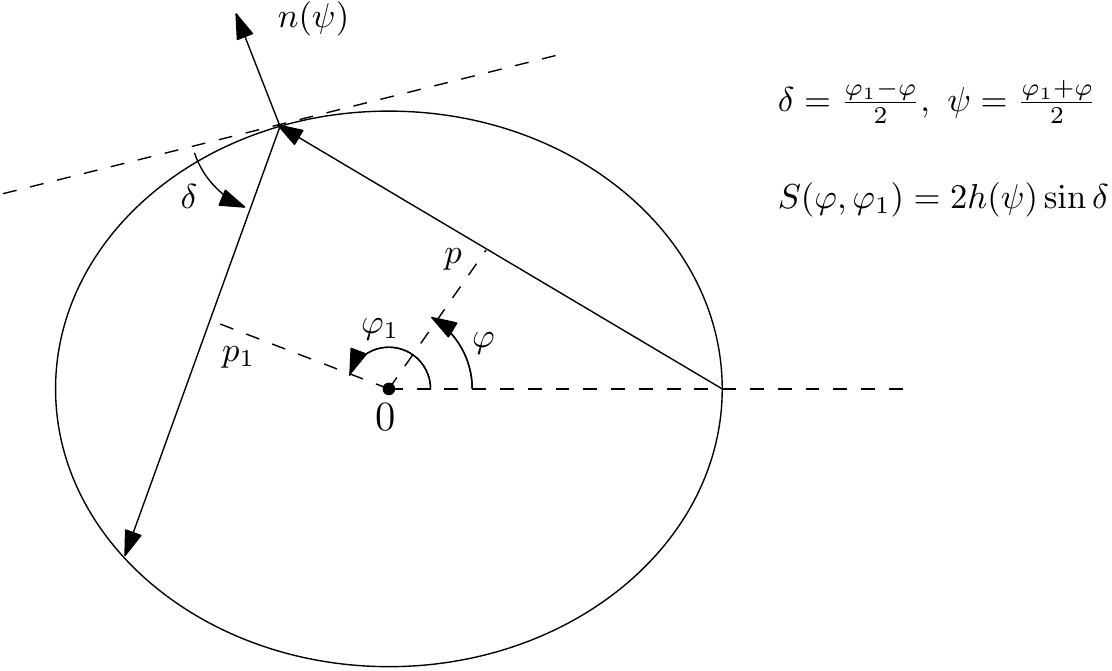}
	\caption{Generating function $S$ corresponding to the 1-form $\beta$}
	\label{fig:S}
\end{figure}
The fact that $S$ is the generating function for $T$ means that  the line with coordinates $(p,\varphi)$ is mapped into the line $(p_1,\varphi_1)$ (see Figure \ref{fig:S}) if and only if
\begin{equation}\label{generating}
	\begin{split}
		p&=-S_1(\varphi,\varphi_1)  =  h(\psi) \cos \delta-h'(\psi) \sin \delta,\\
		p_1&=S_2(\varphi,\varphi_1)  =  h(\psi) \cos \delta+h'(\psi) \sin \delta.
	\end{split}
\end{equation}

It follows from the direct computation (see below Proposition \ref{derivatives }) that the map $T$ satisfies the twist condition with respect to the symplectic coordinates $(p,\varphi)$ meaning that the cross-derivative satisfies $ S_{12}=\frac{1}{2}\rho(\psi)\sin\delta>0$, where $\rho(\psi)=h''(\psi)+h(\psi)>0$ is the radius of curvature.

\subsection{Two variational principles}
One can associate variational principle $\mathcal{S}$ also for the function $S$:
$$
\mathcal S\{\varphi_n\}=\sum_n S(\varphi_{n}, \varphi_{n+1}).
$$
In \cite{Bialy-Tsodikovich} we gave a criterion for an orbit to be locally maximizing. It then follows from this criterion that the set $\mathcal M$ does not depend on which generating function $L$ or $S$ is used for the map $T$.
We shall use the function $S$ in order to prove Theorem \ref{main1}.

{\bf Remark.}
	 It appears that vertical vector in the statement of the Corollary \ref{cor:vertical} can be understood with respect to each of the vertical foliations $\{s=const\}$ or $\{\varphi=const\}$. This follows from the proof of Corollary \ref{cor:vertical} and the fact, proven in \cite{Bialy-Tsodikovich}, that the classes of locally maximizing orbits corresponding to 
	 the generating functions $L, S$ coincide.
	
	In particular, the existence of conjugate points with respect to the vertical foliation $\{\varphi=const\}$ implies that one can find a beam of parallel lines such that after $n$ reflections the beam becomes parallel (infinitesimally) again. 
	 
\subsection{\bf Properties of the invariant curve of $4$-periodic orbits}\label{d-of-psi}If the billiard curve $\gamma$ is an ellipse, then
there exists a rotational invariant curve $\alpha$ consisting of $4$-periodic orbits. The corresponding quadrilaterals inscribed in $\gamma$ are called Poncelet $4$-gons.
It is  well-known (see \cite{Connes-Z} for several proofs) that all Poncelet 4-gons for an ellipse
are  parallelograms. This fact can be generalized  from the case of an  ellipse  
to any centrally-symmetric billiard table. We now turn to state the results from \cite{BM-annals} and refer to \cite{BM-annals} for the proofs. The next theorem is illustrated in Figure \ref{fig:parallelo}.

\begin{theorem}\label{4-periodic}
	Let $\gamma$ be a centrally-symmetric billiard table. Assume that billiard ball map $T:\mathbf A \rightarrow\mathbf A$ has a continuous rotational invariant curve $\alpha=\{\delta=d(\psi)\}$ of rotation number $\frac{1}{4}$ consisting of $4$-periodic orbits of $T$. Then the following properties hold:
	
	\begin{enumerate} [\rm (A)]
		\item Function $d(\psi)$ is  $\pi$-periodic and the billiard quadrilaterals
		corresponding to the traces of the orbits contained in the invariant curve
		$\alpha$ are parallelograms.
		
		\item The tangent lines to $\gamma$ at the vertices of the parallelogram form a rectangle.
		
		\item  $0<d(\psi)<\pi/2,\quad d\left(\psi+\frac{\pi}{2}\right)=\frac{\pi}{2}-d(\psi).$
		
		\item The functions $d$ and $h$ satisfy the identities  $$\tan d(\psi)=\frac{h(\psi)}{h\left(\psi+\frac{\pi}{2}\right)}=-\frac{h'\left(\psi+\frac{\pi}{2}\right)}{h'(\psi)}, \ $$
		and $$h^2(\psi)+h^2\left(\psi+\frac{\pi}{2}\right)=R^2=const.$$
	\end{enumerate}
\end{theorem}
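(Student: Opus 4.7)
My plan is to establish the four parts (A)–(D) in order, leaning on the fact that the central involution $\sigma\colon x\mapsto -x$ of $\mathbf{R}^2$ commutes with $T$ and induces on the phase cylinder the map $(\psi,\delta)\mapsto(\psi+\pi,\delta)$. For (A) I would first argue that $\sigma(\alpha)=\alpha$, which immediately forces $d$ to be $\pi$-periodic. The restriction $\sigma|_\alpha$ is then an orientation-preserving involution of the circle $\alpha$ without fixed points (a point of $\gamma$ is never its own antipode), hence the antipodal rotation of $\alpha$; as such it is the unique rotation by $1/2$ commuting with the rotation $T|_\alpha$ by $1/4$, so $\sigma|_\alpha=T^2|_\alpha$. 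Applied to a 4-periodic orbit $A\mapsto B\mapsto C\mapsto D$ this gives $C=-A$ and $D=-B$, so the quadrilateral is a centrally-symmetric parallelogram. For (B), given such a parallelogram with interior angles $2\alpha,2\beta,2\alpha,2\beta$ and $\alpha+\beta=\pi/2$, the reflection law places $t_A$ at angle $\alpha$ to $\vec{AB}$ and $t_B$ at angle $\beta$ to $\vec{BA}$; an angle chase then gives $\angle(t_A,t_B)=\pi-\alpha-\beta=\pi/2$, and since central symmetry makes opposite tangents parallel, the four tangents form a rectangle. For (C), the rectangle forces the outer normal angles at consecutive vertices to differ by exactly $\pi/2$, so $\psi_{n+1}=\psi_n+\pi/2$, and the identity $d(\psi_0)+d(\psi_1)=\alpha+\beta=\pi/2$ from (B) reads $d(\psi+\pi/2)=\pi/2-d(\psi)$; strict inequalities $0<d<\pi/2$ come from non-degeneracy (otherwise the chord is tangent or a 2-periodic diameter).

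Part (D) is the computational one. Using $\gamma(\psi)=h(\psi)n_\psi+h'(\psi)t_\psi$ together with $n_{\psi+\pi/2}=t_\psi$ and $t_{\psi+\pi/2}=-n_\psi$, and writing $h_j:=h(\psi_0+j\pi/2)$, I compute the chord vectors at vertex $A$:
\begin{align*}
\vec{AB}&=-(h_0+h'_1)\,n_0+(h_1-h'_0)\,t_0,\\
\vec{DA}&=(h_0-h'_1)\,n_0+(h_1+h'_0)\,t_0.
\end{align*}
The reflection law at $A$ says that both chords make the same acute angle $d(\psi_0)$ with the tangent direction $t_0$, hence
\[
\tan d(\psi_0)=\frac{h_0+h'_1}{h_1-h'_0}=\frac{h_0-h'_1}{h_1+h'_0}.
\]
Cross-multiplying the two equal right-hand sides and cancelling the common terms collapses everything to $h(\psi)h'(\psi)+h(\psi+\pi/2)h'(\psi+\pi/2)=0$, which integrates to $h^2(\psi)+h^2(\psi+\pi/2)=R^2$. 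Substituting the resulting relation $h'_1=-h_0h'_0/h_1$ back into the first expression reduces $\tan d(\psi_0)$ to $h_0/h_1$, and the alternative form $-h'(\psi+\pi/2)/h'(\psi)$ is immediate from the same relation.

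The main obstacle is the very first step in (A), namely identifying $\sigma(\alpha)$ with $\alpha$. Because the rotation number $1/4$ is rational, the standard uniqueness theorems for rotational invariant curves do not apply, and in principle $\sigma(\alpha)$ could be a distinct invariant graph of the same rotation number whose 4-periodic orbits are interleaved with those of $\alpha$. Ruling this out requires an additional argument (combining the Birkhoff-type graph property with the fact that both curves would have to consist of 4-periodic orbits on the same centrally-symmetric $\gamma$), and this is the delicate part that is treated in the cited paper \cite{BM-annals}. Once this identification is settled, the rest of (A) together with (B), (C) and (D) reduce to the Euclidean geometry and support-function bookkeeping sketched above.
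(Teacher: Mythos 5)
First, a point of reference: the paper does not prove Theorem \ref{4-periodic} at all --- it states it as an imported result and explicitly refers to \cite{BM-annals} for the proofs. So there is no internal argument to measure your proposal against; what matters is whether your sketch would stand on its own. Most of it would. Your derivations of (B), (C) and (D) from (A) are correct: the reflection law makes the interior angles of the parallelogram $\pi-2\delta_A$, $\pi-2\delta_B$ with $\delta_A+\delta_B=\pi/2$, whence perpendicular consecutive tangents and $\psi_{n+1}=\psi_n+\pi/2$; and your chord vectors $\vec{AB}$, $\vec{DA}$ are right, the equal-angle condition does collapse to $h_0h_0'+h_1h_1'=0$, which integrates to $h^2(\psi)+h^2(\psi+\tfrac{\pi}{2})=R^2$ and yields $\tan d=h_0/h_1=-h_1'/h_0'$. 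I checked this algebra and it is sound.

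The genuine gap is exactly where you locate it, and it is not a peripheral technicality but the entire content of the theorem: the claim $\sigma(\alpha)=\alpha$, equivalently that each Poncelet quadrilateral on $\alpha$ is centrally symmetric. Because the rotation number $1/4$ is rational, rotational invariant curves with that rotation number need not be unique, so $\sigma(\alpha)$ could a priori be a second invariant graph of $4$-periodic orbits interleaved with $\alpha$; nothing in your sketch rules this out, and you defer precisely this point to \cite{BM-annals} --- which is the same thing the paper does for the whole theorem. Everything downstream (parallelogram, rectangle, the identities in (D)) hinges on it, so the proposal is a correct reduction of the theorem to its hardest sub-claim rather than a proof. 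One smaller repair you should also make: even granting $\sigma(\alpha)=\alpha$, the assertion $\sigma|_\alpha=T^2|_\alpha$ does not follow merely from ``uniqueness of the rotation by $1/2$ commuting with the rotation by $1/4$,'' since $\sigma|_\alpha$ is not given as a rotation in the conjugating coordinate. What you need (and what is true) is the lemma that an orientation-preserving, fixed-point-free involution of the circle commuting with $R_{1/4}$ must equal $R_{1/2}$: its composition with $R_{-1/2}$ is an orientation-preserving involution of rotation number zero, hence has a fixed point, hence is the identity. With that lemma inserted, and with the central-symmetry of the invariant curve supplied from \cite{BM-annals}, your argument closes.
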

\noindent{\bf Remark.}
	It follows from Theorem \ref{4-periodic} item (D), that the orthoptic curve  associated with  $\gamma$ is a circle of radius  $R$ (like in the case of an ellipse). Here the orthoptic curve of $\gamma$, by definition, is the locus of points $Q$, such that the two tangents to $\gamma$ passing through $Q$ form a right angle.

\begin{corollary}\label{relations1}
	Let $\gamma$ be a convex centrally-symmetric billiard table. Let $\alpha=\{\delta=d(\psi)\}\subset\mathbf A$ be an invariant curve consisting of $4$-periodic orbits. It then follows from Theorem \ref{4-periodic} item (D) that 
	$$
	h(\psi)= R\sin d(\psi) ,\quad h\left(\psi+\frac{\pi}{2}\right)=R\cos d(\psi), 
	$$
	for a positive constant $R$.
\end{corollary}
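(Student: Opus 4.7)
The plan is to read off $h(\psi)$ and $h(\psi+\pi/2)$ directly from the two relations assembled in item (D) of Theorem \ref{4-periodic}: the ratio identity $\tan d(\psi) = h(\psi)/h(\psi+\pi/2)$ and the constancy $h^2(\psi) + h^2(\psi+\pi/2) = R^2$. These are the only ingredients needed; the statement is purely algebraic once item (D) is in hand.

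First I would rewrite the ratio identity as $h(\psi)\cos d(\psi) = h(\psi+\pi/2)\sin d(\psi)$, which says that the vector $(h(\psi), h(\psi+\pi/2))$ is parallel to $(\sin d(\psi), \cos d(\psi))$. Hence there is a scalar $\lambda(\psi)$ with $h(\psi) = \lambda(\psi)\sin d(\psi)$ and $h(\psi+\pi/2) = \lambda(\psi)\cos d(\psi)$. Substituting into the constancy identity gives $\lambda^2(\psi)\bigl(\sin^2 d(\psi) + \cos^2 d(\psi)\bigr) = R^2$, so $\lambda(\psi)^2 = R^2$ and $\lambda(\psi)$ is therefore constant up to sign.

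To fix the sign I would invoke item (C) of the same theorem, which gives $0 < d(\psi) < \pi/2$, so that $\sin d(\psi)$ and $\cos d(\psi)$ are both strictly positive; combined with the positivity of the support function $h$ (the origin is chosen at the center of symmetry of $\gamma$ and so lies in the interior), this forces $\lambda(\psi) = +R$. There is no real obstacle in this argument; the only thing to verify carefully is that both positivity conditions are genuinely furnished by parts (C) and (D) of Theorem \ref{4-periodic}, so that the step from the two scalar identities to the trigonometric parametrization is legitimate.
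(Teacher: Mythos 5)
Your proof is correct and is exactly the algebra the paper has in mind: the corollary is stated as an immediate consequence of item (D), and your derivation (parallelism of $(h(\psi),h(\psi+\tfrac{\pi}{2}))$ with $(\sin d,\cos d)$ from the tangent identity, normalization via $h^2(\psi)+h^2(\psi+\tfrac{\pi}{2})=R^2$, and sign fixed by $0<d<\tfrac{\pi}{2}$ together with positivity of the support function about the center) simply spells out that step.
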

\begin{corollary}
	The explicit formulas of item (D) show that the invariant curve $\alpha$ is necessarily $C^2$-smooth, since the support function $h$ is $C^2$-smooth by assumption.
\end{corollary}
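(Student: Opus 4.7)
The plan is to read off an explicit formula for the function $d(\psi)$ defining the invariant curve $\alpha=\{\delta=d(\psi)\}$ and then check that it inherits the regularity of the support function $h$.

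First I would use item (D) of Theorem \ref{4-periodic}, which gives
\[
\tan d(\psi)=\frac{h(\psi)}{h(\psi+\pi/2)}.
\]
Item (C) of the same theorem guarantees $0<d(\psi)<\pi/2$, so $\tan d(\psi)\in(0,\infty)$ and $d$ is recovered unambiguously by
\[
d(\psi)=\arctan\!\left(\frac{h(\psi)}{h(\psi+\pi/2)}\right).
\]

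Next I would note that the hypothesis on $\gamma$ (a $C^2$-smooth convex curve of positive curvature, parametrized by the normal angle) translates, via the identity $\rho(\psi)=h''(\psi)+h(\psi)>0$, into $h\in C^2(\mathbf{R}/2\pi\mathbf{Z})$. Moreover Corollary \ref{relations1} yields $h(\psi+\pi/2)=R\cos d(\psi)$, which is strictly positive because $d\in(0,\pi/2)$. Hence the quotient $h(\psi)/h(\psi+\pi/2)$ is a $C^2$ function of $\psi$ taking values in a compact subset of $(0,\infty)$, and post-composing with the smooth function $\arctan$ gives $d\in C^2$. Therefore the graph $\alpha$ is a $C^2$-smooth curve in the phase cylinder.

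There is no serious obstacle here: the corollary is essentially an immediate unpacking of the explicit identities from Theorem \ref{4-periodic}(D), with the only minor point being to verify that the denominator $h(\psi+\pi/2)$ never vanishes, which follows from item (C). Its role is to promote the merely continuous invariant curve assumed in Theorem \ref{main1} to one of class $C^2$, so that $d$ (and hence $\alpha$) can be freely differentiated in subsequent arguments.
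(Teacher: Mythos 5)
Your proposal is correct and takes essentially the same route as the paper, which offers no argument beyond citing the formulas of item (D): writing $d(\psi)=\arctan\bigl(h(\psi)/h(\psi+\pi/2)\bigr)$, noting the denominator never vanishes (by item (C), or simply because $h>0$ as the origin is an interior point of $\gamma$), and composing $C^2$ functions is exactly the intended unpacking.
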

\begin{figure}[h]
	\centering
	\includegraphics[width=0.9\linewidth]{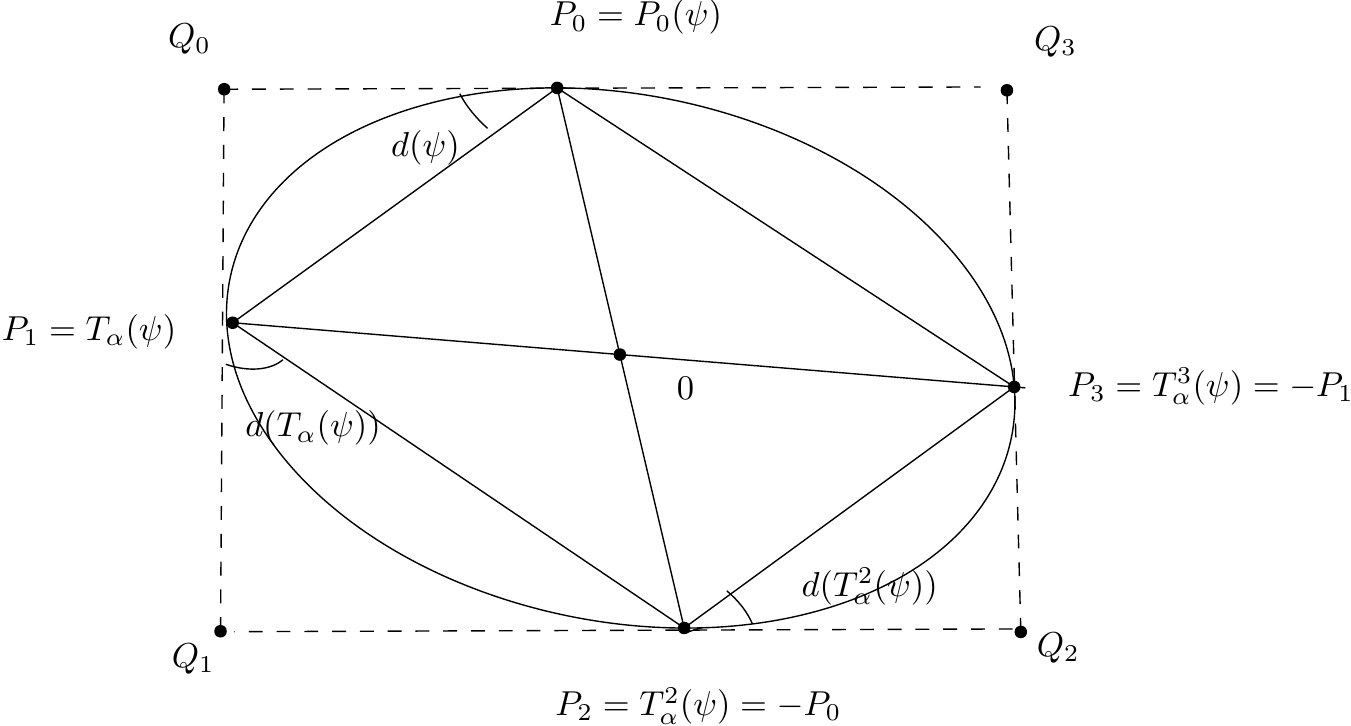}
	\caption{Rectangle $Q_0 Q_1 Q_2 Q_3$ corresponding to the $4$-periodic orbit forming parallelogram $P_0 P_1 P_2 P_3$. }
	\label{fig:parallelo}
\end{figure}
\subsection{Function $\omega$ and an inequality}
It turns out that one can introduce a measurable bounded function $\omega$ on the set $\mathcal M$ satisfying the inequality:
\begin{equation}\label{ineq:S}
	\begin{split}
		&\omega(p_1,\varphi_1)-\omega(p,\varphi)\geq\\
		\geq& S_{11}(\varphi,\varphi_1)+S_{22}(\varphi,\varphi_1) +2S_{12}(\varphi,\varphi_1).
	\end{split}
\end{equation}
The construction of this function (see \cite{B0}) was inspired by the celebrated E.Hopf theorem on tori with no conjugate points. 
Let me sketch this construction. 
Let $\{(p_n,\varphi_n)\}$ be a locally maximizing orbit of the point $z=(p_0,\varphi_0)$. It then follows that there exists an invariant vector field $\{(\delta p_n,\delta \varphi_n)\}$ along the orbit $\{(p_n,\varphi_n)\}$ such that the corresponding field $\delta \varphi_n$ is a Jacobi field along the billiard configuration $\{\varphi_n\}$  (normalized by $\delta \varphi_0=1$) and is strictly positive.  
Let me remind, a \textit{Jacobi field} along a configuration $\{\varphi_n\}$ is a sequence $\{\delta \varphi_n\}$ satisfying the
\textit{discrete Jacobi equation}:
\begin{equation}
	\label{eq:Jacobi}
	b_{n-1}\delta  \varphi_{n-1}+a_n
	\delta  \varphi_n+b_{n}\delta  \varphi_{n+1}=0,
\end{equation}
where, as before,
\[a_n=S_{22}( \varphi_{n-1}, \varphi_n)+S_{11}( \varphi_n, \varphi_{n+1}),\
b_n=S_{12}( \varphi_n, \varphi_{n+1}).\]

Then the invariance of the field $\{(\delta p_n,\delta\varphi_n)\}$ along the orbit implies (by differentiating the formula $p_n=-S_1(\varphi_n,\varphi_{n+1})$): 
\[
\delta p_n=-S_{11}(\varphi_n,\varphi_{n+1})\delta \varphi_n-S_{12}(\varphi_n,\varphi_{n+1})\delta \varphi_{n+1}
,\]
or equivalently, due to the Jacobi equation:
\[
\delta p_n=S_{22}(\varphi_{n-1},\varphi_{n})\delta \varphi_n+S_{12}(\varphi_{n-1},\varphi_{n})\delta \varphi_{n-1}.
\]
Then one defines $\omega(p_n,\varphi_n):=\frac{\delta p_n}{\delta \varphi_n}$. One can prove that $\omega$ is measurable function and satisfies the relations:
\begin{equation}\label{eq:omega-T}
	\begin{cases}
		\omega(T(p,\varphi))=S_{22}(\varphi,\varphi_1)+S_{12}(\varphi,\varphi_1)\delta \varphi_1(\varphi,p)^{-1},\\
		\omega(p, \varphi)=-S_{11}(\varphi, \varphi_1)-S_{12}(\varphi,\varphi_1)\delta \varphi_{1}(\varphi,p).
	\end{cases}
\end{equation}
Subtracting the second equation from the first one and using $S_{12}>0,$ $ \delta\varphi_1>0$ we get the inequality (\ref{ineq:S}).

Also notice, that  from the formulas  (\ref{eq:omega-T}) we have the inequality
$$
S_{22}(\varphi_{-1},\varphi)<\omega(p,\varphi)<-S_{11}(\varphi,\varphi_1),
$$ since  in (\ref{eq:omega-T})  $S_{12},\delta\varphi_1, \delta\varphi_{-1}$ are positive.
 Using  Proposition (\ref{derivatives }) it then follows, that function $\omega$ is bounded on $\mathcal M_\mathcal B$:
$$
|\omega|<\max_{\mathcal B} \{|S_{11}|, |S_{22}|\}<K(\gamma),
$$ where $K(\gamma)$ depends only on $\gamma$ (for example one can set $K(\gamma)=\max_\gamma\{\rho + h+ |h'|\}$, using formulas of Proposition (\ref{derivatives })).
\subsection{Derivatives of generating function $S$}
The derivatives of the generating function $S$ can be immediately computed: 
\begin{proposition}\label{derivatives }
	The  second partial derivatives of $S $ are:
	\begin{align*}
		&
		S_{11}(\varphi,\varphi_1)=\frac{1}{2}(h''(\psi)-h(\psi))\sin \delta -h'(\psi)\cos \delta;&\\
		&S_{22}(\varphi,\varphi_1)=\frac{1}{2}(h''(\psi)-h(\psi))\sin \delta +h'(\psi)\cos \delta;&\\
		&S_{12}(\varphi,\varphi_1)=\frac{1}{2}(h''(\psi)+h(\psi))\sin \delta ,&
	\end{align*}
	where $\psi:=\frac{\varphi_1+\varphi}{2},\quad \delta:=\frac{\varphi_1-\varphi}{2}.$
\end{proposition}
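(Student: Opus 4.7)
My plan is to prove this proposition by direct differentiation, using the chain rule in the auxiliary variables $\psi = (\varphi + \varphi_1)/2$ and $\delta = (\varphi_1 - \varphi)/2$. First I would record the elementary partials: $\partial\psi/\partial\varphi = \partial\psi/\partial\varphi_1 = 1/2$, while $\partial\delta/\partial\varphi = -1/2$ and $\partial\delta/\partial\varphi_1 = 1/2$. Applied to $S(\varphi,\varphi_1) = 2h(\psi)\sin\delta$ these give the first partials
$S_1 = h'(\psi)\sin\delta - h(\psi)\cos\delta$ and $S_2 = h'(\psi)\sin\delta + h(\psi)\cos\delta$, which incidentally matches the identities $-p = S_1$, $\;p_1 = S_2$ recorded in (\ref{generating}) and serves as a convenient sanity check.

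Next I differentiate once more. For $S_{11}$ I apply $\partial/\partial\varphi$ to each of the four summands in $S_1$; the $h''\sin\delta$ term picks up a factor $1/2$, the two $h'\cos\delta$ terms produced by differentiating $h'(\psi)$ and by differentiating $\cos\delta$ reinforce into a single $-h'(\psi)\cos\delta$, and differentiating $h(\psi)\cos\delta$ contributes a $-(1/2)h(\psi)\sin\delta$. Grouped, this is exactly $\tfrac12(h''(\psi)-h(\psi))\sin\delta - h'(\psi)\cos\delta$. The computation of $S_{22}$ is structurally identical; because $\partial\delta/\partial\varphi_1$ has the opposite sign to $\partial\delta/\partial\varphi$, only the $h'(\psi)\cos\delta$ term flips sign, yielding the claimed formula. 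For the cross derivative $S_{12}$ I differentiate $S_1$ with respect to $\varphi_1$: now the two $h'\cos\delta$ contributions cancel (rather than reinforce), and the $h(\psi)\sin\delta$ term changes sign relative to the $S_{11}$ calculation, producing $\tfrac12(h''(\psi)+h(\psi))\sin\delta$.

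There is essentially no obstacle beyond careful sign bookkeeping. The calculation is an immediate application of the chain rule. It is worth remarking only that the formula for $S_{12}$ makes the twist condition transparent, since $h''(\psi)+h(\psi) = \rho(\psi) > 0$ is the radius of curvature of $\gamma$ and $\sin\delta > 0$ on the interior of the phase cylinder; this is the point already used when the twist property of $T$ in the $(p,\varphi)$ coordinates was invoked.
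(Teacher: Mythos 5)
Your computation is correct and is exactly the argument the paper has in mind: Proposition \ref{derivatives } is stated with the remark that the derivatives ``can be immediately computed,'' i.e.\ by the same chain-rule differentiation of $S=2h(\psi)\sin\delta$ in the variables $\psi=\frac{\varphi+\varphi_1}{2}$, $\delta=\frac{\varphi_1-\varphi}{2}$ that you carry out, and your cross-check of $S_1,S_2$ against (\ref{generating}) confirms the signs. (Only a cosmetic slip: the two $-\tfrac12 h'\cos\delta$ contributions in $S_{11}$ come from differentiating the $\sin\delta$ factor of the first summand and the $h(\psi)$ factor of the second, not from differentiating $h'(\psi)$; your totals are nonetheless right.)
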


\section{\bf Proof of Theorem \ref{main1}}
In the sequel we shall work with the coordinates $(p,\varphi)$ and the function $\omega$
constructed above.
We start the proof of Theorem \ref{main1}
integrating (\ref{ineq:S}) over $\mathcal M_\mathcal B$ with respect to the invariant measure $d\mu=dpd\varphi$.

In order to perform the integration we compute the invariant measure as follows.

The symplectic form $dp \wedge d\varphi $  can be written using generating function (\ref{generating}):
$$
dp\wedge d\varphi=-d(S_1(\varphi,\varphi_1))\wedge d\varphi=S_{12}d\varphi\wedge d\varphi_1.
$$
Since $T$ is symplectic, the measure
$$d\mu= dp d\varphi=S_{12}d\varphi d\varphi_1,$$
is invariant.
Using the explicit formula for the second derivative (Proposition \ref{derivatives }) we compute:
$$
d\mu= S_{12}d\varphi d\varphi_1=
$$
$$
=\left(\frac{1}{2}\rho(\psi)\sin\delta\right)d\varphi d\varphi_1=\rho(\psi)\sin\delta d\psi d\delta,$$ where (see Figure \ref{fig:S} showing all the notations)
$$\rho(\psi)=h''(\psi)+h(\psi) $$ is the radius of curvature of $\gamma$, and

$$\quad  \psi:=\frac{\varphi_1+\varphi}{2},\quad \delta:=\frac{\varphi_1-\varphi}{2}.
$$

Hence, integrating the inequality (\ref{ineq:S}) with respect to the invariant measure $d\mu$
we obtain:
$$
0\geq \int_{\mathcal M_\mathcal B}[S_{11}(\varphi,\varphi_1)+2S_{12}(\varphi,\varphi_1)+S_{22}(\varphi,\varphi_1) ]d\mu.
$$
Moreover, we get from Proposition \ref{derivatives } after obvious simplifications
$$
S_{11}(\varphi,\varphi_1)+2S_{12}(\varphi,\varphi_1)+S_{22}(\varphi,\varphi_1) =2h''(\psi)\sin\delta.
$$

Thus we get  from (\ref{ineq:S}) the inequality:
\begin{equation}\label{eq:I-negative}
		0\geq  \int_{\mathcal M_\mathcal B}[(h''(\psi)\sin\delta]d\mu.
	\end{equation}
Since $\mathcal M_\mathcal B=\mathcal B\setminus\Delta_\mathcal B$ we get:
\begin{equation}\label{eq:Bdelta}
		\int_{\mathcal B}[(h''(\psi)\sin\delta]d\mu\leq\int_{\Delta_\mathcal B}[(h''(\psi)\sin\delta]d\mu
\end{equation}

Let us denote 
\begin{equation}\label{eq:I}
	I:=\int_{\mathcal B}[(h''(\psi)\sin\delta]d\mu.
\end{equation}
We shall give an upper bound for the right hand side of (\ref{eq:Bdelta}), and a lower bound on the left hand side $I$, and together we get the required bound.
For the right hand side of (\ref{eq:Bdelta}), write:
\begin{equation}
	\begin{split}
	&\int\limits_{\Delta_\mathcal B} h''(\psi)\sin\delta d\mu \leq \Big| \int\limits_{\Delta_\mathcal B} h''(\psi)\sin\delta d\mu \Big|\leq \\
		&\leq \int\limits_{\Delta_\mathcal B}|h''(\psi)\sin\delta| d\mu \leq \mu(\Delta_\mathcal B)\max\limits_{\Delta_\mathcal B} |h''|.
	\end{split}
\end{equation}
Since $h(\psi)+h''(\psi)=\rho(\psi)$, where $\rho(\psi)$ is the radius of curvature, then $$|h''|\leq\rho+\max h.$$
Since $\gamma$ is centrally symmetric, we have $\max h\leq D/2$, where $D$ is the diameter.
Also, the maximal radius of curvature of $\gamma$ is $\frac{1}{\beta}$ where $\beta$ is the minimal curvature of $\gamma$.
This gives us the estimate
\begin{equation}\label{eq:RHSEffectiveBound}I=\int_{\mathcal B}[(h''(\psi)\sin\delta]d\mu\leq
	\int\limits_{\Delta_\mathcal B} h''(\psi)\sin\delta d\mu \leq \Big(\frac{D}{2}+\frac{1}{\beta}\Big)\mu(\Delta_\mathcal B)\leq \frac{2}{\beta}\mu(\Delta_\mathcal B),
\end{equation}
where we used Blaschke's rolling disk theorem, stating that $\gamma$ is contained inside a disk with radius equal to the maximal radius of curvature of $\gamma$, and this means that $D\leq\frac{2}{\beta}$.

We now turn to estimate $I$ from below. Namely, we shall prove in the next Section the following:
\begin{theorem}\label{thm:I-below}Integral $I$ can be estimated from below:
$$I\geq \frac{3}{8}(P^2-4\pi A). $$
\end{theorem}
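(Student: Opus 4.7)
The plan is to integrate out the $\delta$ variable in $I$ explicitly, then interpret the remaining functional of $h$ as a quadratic form that is bounded below by the isoperimetric defect.

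First, pass to coordinates $(\psi,\delta)$. Since $\bar\alpha$ consists of the same $4$-periodic trajectories as $\alpha$ but with reversed orientation, it is located in $(\psi,\delta)$-coordinates at $\{\delta = \pi - d(\psi)\}$, so that $\mathcal{B} = \{(\psi,\delta):\, d(\psi) \leq \delta \leq \pi - d(\psi)\}$. With $d\mu = \rho(\psi)\sin\delta\, d\psi\, d\delta$ the inner integration
\[
\int_{d(\psi)}^{\pi-d(\psi)}\sin^2\delta\, d\delta = \tfrac12\bigl(\pi - 2d(\psi) + \sin 2d(\psi)\bigr)
\]
reduces $I$ to
\[
I = \tfrac12\int_0^{2\pi} h''(\psi)\,\rho(\psi)\,\bigl(\pi - 2d(\psi) + \sin 2d(\psi)\bigr)\, d\psi.
\]

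Second, remove the implicit function $d(\psi)$ using Theorem \ref{4-periodic}. With the notations $\tilde h(\psi) := h(\psi+\pi/2)$ and $\tilde d(\psi) := d(\psi+\pi/2) = \pi/2 - d(\psi)$, items (C) and (D) together with Corollary \ref{relations1} give $\pi - 2d = 2\tilde d$, $\sin 2d = 2h\tilde h/R^2$, $d' = h'/\tilde h$, and the constraint $h^2 + \tilde h^2 = R^2$. Substituting $\rho = h + h''$ and integrating by parts (using periodicity) transfers derivatives off $h''$, leaving a combination of quadratic terms in $h, h', \tilde h, \tilde h'$ together with a term weighted by $\tilde d$. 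The reflection symmetry $\psi \mapsto \psi + \pi/2$, which swaps $(h,d)$ with $(\tilde h, \tilde d)$, should allow one to symmetrize the integrand and, using $h^2 + \tilde h^2 = R^2$, to collapse the $\tilde d$-contribution into a clean quadratic expression.

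Third, bound from below via Fourier analysis. By central symmetry, $h = a_0 + \sum_{n\geq 2,\text{ even}}(a_n\cos n\psi + b_n\sin n\psi)$; combining the formulas $P = \int_0^{2\pi} h\, d\psi$ and $A = \tfrac12\int_0^{2\pi}(h^2 - h'^2)\, d\psi$ one obtains
\[
P^2 - 4\pi A = 2\pi^2 \sum_{n\geq 2,\text{ even}} (n^2 - 1)(a_n^2 + b_n^2).
\]
The sharp constant $\tfrac{3}{8}$ reflects precisely the spectral gap $n^2 - 1 = 3$ at the lowest admissible mode $n = 2$; matching the Fourier expansion of the symmetrized expression from step two with this series should close the argument, with equality if and only if only the constant mode survives, i.e.\ $\gamma$ is a circle.

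The main obstacle lies in the second step. After integration by parts the integrand mixes the transcendental $\tilde d$-contribution with polynomial expressions in $h,\tilde h$; showing that, after symmetrization, these combine into a non-negative quadratic form in $h - a_0$ whose lowest Fourier mode produces exactly the factor $\tfrac{3}{8}$ requires fully exploiting the $\psi\leftrightarrow\psi+\pi/2$ involution together with $h^2 + \tilde h^2 = R^2$. This is the genuinely new feature of the proof, beyond the earlier approach of \cite{Bialy-Tsodikovich} which dealt with the region adjacent to the boundary of the phase cylinder.
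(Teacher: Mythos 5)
Your Step 1 (integrating out $\delta$ over $[d(\psi),\pi-d(\psi)]$) and your final isoperimetric identity are both correct and agree with what the paper does; the Fourier formula $P^2-4\pi A=2\pi^2\sum_{n\ge2}(n^2-1)(a_n^2+b_n^2)$ is equivalent to the Cauchy--Schwarz step $P^2-4\pi A\le 2\pi\int_0^{2\pi}h'^2\,d\psi$ used in the paper. The problem is that everything in between --- your Step 2 --- is not a proof but a statement of hope, and you say so yourself (``should allow one to \dots collapse the $\tilde d$-contribution into a clean quadratic expression''). That missing step is the entire content of the theorem. After the $\delta$-integration the integrand is $h''(h+h'')\,w$ with the weight $w=\frac{\pi}{2}-d+\frac12\sin 2d$ a transcendental function of $d(\psi)$, hence of $h$; it is not constant and tends to $0$ as $d\to\pi/2$. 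Consequently the functional is \emph{not} a fixed quadratic form in $h-a_0$: integrating $hh''w$ by parts produces $-h'^2w-hh'w'$, whose sign is not pointwise controlled, and no amount of $\psi\mapsto\psi+\frac{\pi}{2}$ symmetrization alone turns this into a nonnegative quadratic form amenable to a direct mode-by-mode Fourier comparison. In particular your diagnosis that the constant $\frac38$ ``reflects the spectral gap at $n=2$'' is only morally right; in the actual argument the factor $3$ arises as $4-1$ from a Wirtinger inequality applied to an auxiliary function, not from expanding $h$ in Fourier series.

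Concretely, the paper closes the gap as follows: it substitutes $h=R\sin d$, $\tilde h=R\cos d$ (Corollary \ref{relations1}) so that the whole integrand becomes a polynomial in $\sin 2d,\cos 2d,d',d''$ times affine functions of $d$; it symmetrizes under $\psi\mapsto\psi+\frac{\pi}{2}$ (which flips the sign of $d''$ and swaps $\sin d\leftrightarrow\cos d$); it integrates by parts to remove all first-power occurrences of $d''$, arriving at \eqref{eq:UX}; and then --- this is the step your outline has no substitute for --- it applies the Wirtinger inequality to $Y=d'\sqrt{f}$ with $f=\frac{\pi}{4}+(\frac{\pi}{4}-d)\cos 2d+\frac12\sin 2d$ in order to trade the remaining positive $d''^2$-term against the negative $d'^2$-term, after which Lemma \ref{lm:brackets} verifies pointwise positivity of the surviving coefficients and the bound $3f+\sin 2d\ge\frac{3\pi}{2}$ in \eqref{eq:f} yields $2\int_0^\pi U\,d\psi\ge\frac{3\pi}{2}\int_0^\pi d'^2\,d\psi\ge\frac{3\pi}{2R^2}\int_0^\pi h'^2\,d\psi$. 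If you want to complete your proposal you must either reproduce this Wirtinger-plus-positivity mechanism or supply a genuinely different device for controlling the indefinite cross term; as written, the argument does not go through.
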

 Proof of Theorem \ref{main1} follows immediately from (\ref{eq:RHSEffectiveBound}) and Theorem \ref{thm:I-below}.
  
  \section{\bf Proof of Theorem \ref{thm:I-below} }
  Substituting into the integral $I$ the explicit expression $d\mu=\rho(\psi)\sin\delta d\psi d\delta$ and integrating first with respect to $\delta$, we get from (\ref{eq:I}):
\begin{equation}\label{eq-integral-delta}
		I= \int_{0}^{2\pi}d\psi\ \ \Big[ h''(h+h'')\int_{d(\psi)}^{\pi -d(\psi)}d\delta\ \sin^2\delta\Big].
	\end{equation}

Here we used the fact that  in the coordinates $(\psi,\delta)$ the domain of integration takes the form $$\mathcal B=\{(\psi,\delta):\psi\in[0,2\pi],\  \delta\in[d(\psi), \pi-d(\psi)]\}.$$Here and below $d(\psi)$ is the function described in Subsection \ref{d-of-psi}

Integrating in (\ref{eq-integral-delta}) with respect to  $\delta$ we obtain
\begin{equation}\label{krok}
		I= \int_{0}^{2\pi} \big[h''(\psi)(h''(\psi)+h(\psi))\big]\left(\frac{\pi}{2}-d(\psi)+\frac{1}{2}\sin 2d(\psi)\right)d\psi.
	\end{equation}
Now we substitute into (\ref{krok})  the expressions  for $h,h',h''$ via $d(\psi)$ using Corollary \ref*{relations1} of Theorem \ref{4-periodic}:
\begin{equation}\label{relations}
	\begin{cases}h=R\sin d,\\
		h'=R \cos d \ d', \\
		h''=R\cos d\  d''-R\sin d\ ( d')^2,\\
		d(\psi+\frac{\pi}{2})=\frac{\pi}{2}-d(\psi).
	\end{cases}
\end{equation} 
In what follows we usually omit the arguments for the functions $h,d$ and their derivatives. 

Thus we get from (\ref{eq:RHSEffectiveBound}) the following equality on the function $d$:
\begin{equation}\label{krokodil1}\begin{split}
		 I=& {R^2}\int_{0}^{2\pi} \left(\sin d-\sin d\  d'^2+\cos d\ d''\right)\left(-\sin d\  d'^2+\cos d\  d''\right)\\
		&\left(\frac{\pi}{2}-d+\frac{1}{2}\sin 2d\right)d\psi={R^2}\int_{0}^{2\pi}Ud\psi,
	\end{split}
\end{equation}
where we introduced $U$ by the formula $$
U:= \left(\sin d-\sin d\  d'^2+\cos d\ d''\right)\left(-\sin d\  d'^2+\cos d\  d''\right)\\
\left(\frac{\pi}{2}-d+\frac{1}{2}\sin 2d\right).
$$
The assumption of central symmetry implies
that $h(\psi),d(\psi)$ are $\pi$-periodic. Hence: 
$$
\int_0^{2\pi}U(\psi)d\psi=2 \int_0^{\pi}U(\psi)d\psi.
$$
We shall prove now the following
\begin{theorem}\label{thm:U-below}
	$$
	\int_0^{\pi}U(\psi)d\psi\geq \frac{3}{16R^2}(P^2-4\pi A).$$

\end{theorem}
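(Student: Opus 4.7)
The plan is to first recognize that the integrand $U$ collapses to a manifestly geometric expression. Using the relations \eqref{relations}, the first factor of $U$ equals $(h+h'')/R = \rho/R$, and the second equals $h''/R$, so
$$U(\psi) \;=\; \frac{1}{R^2}\,\rho(\psi)\,h''(\psi)\,f(d(\psi)),\qquad f(d) := \tfrac{\pi}{2}-d+\tfrac{1}{2}\sin 2d.$$
Since $h$, $\rho$ and $d$ are all $\pi$-periodic, $\int_0^\pi U\,d\psi = \frac{1}{2R^2}\int_0^{2\pi}\rho\,h''\,f(d)\,d\psi$, and the target is equivalent to
$$\int_0^{2\pi}\rho\,h''\,f(d)\,d\psi \;\geq\; \tfrac{3}{8}\bigl(P^2 - 4\pi A\bigr).$$

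The isoperimetric defect enters via Cauchy--Schwarz. From $P = \int_0^{2\pi}\rho\,d\psi$ and $2A = \int_0^{2\pi} h\rho\,d\psi$, together with $\rho\,h'' = \rho(\rho-h) = \rho^2 - h\rho$, one gets
$$\int_0^{2\pi}\rho\,h''\,d\psi \;=\; \int_0^{2\pi}\rho^2\,d\psi - 2A \;\geq\; \frac{P^2 - 4\pi A}{2\pi},$$
using $(\int\rho)^2 \le 2\pi\int\rho^2$. Were $f$ a positive constant $c \ge 3\pi/4$, this step would already conclude the proof; but $f$ takes values in $[0,\pi/2]$ and vanishes at $d=\pi/2$, so no pointwise multiplication yields the desired constant.

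To accommodate the non-uniform weight $f(d)$, I would exploit the central symmetry $d(\psi+\pi/2) = \pi/2 - d(\psi)$ from Theorem \ref{4-periodic}(C). Pairing $\psi$ with $\psi+\pi/2$ and invoking the clean algebraic identity $f(d) + f(\pi/2-d) = \tfrac{\pi}{2} + \sin 2d$, together with an integration-by-parts step that exploits $f'(d) = -2\sin^2 d = -2h^2/R^2$, the symmetrized left-hand side should rewrite as a manifestly non-negative quadratic expression plus the desired multiple of $P^2 - 4\pi A$. The main obstacle is precisely the interplay between the variable weight $f(d)$ and the sign-indefinite factor $\rho h''$: a uniform lower bound on $f$ is unavailable, and the regions where $\rho h'' < 0$ or $f(d)$ is small must be compensated using the structural constraints on $d$ imposed by the existence of the $4$-periodic invariant curve (Theorem \ref{4-periodic}).
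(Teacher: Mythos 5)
Your opening reductions are correct: $U=\tfrac{1}{R^2}\rho\,h''\,f(d)$ with $f(d)=\tfrac{\pi}{2}-d+\tfrac12\sin 2d$, the identity $\int_0^{2\pi}\rho h''\,d\psi=\int_0^{2\pi}\rho^2\,d\psi-2A\ge (P^2-4\pi A)/(2\pi)$ holds, and you rightly observe that no pointwise bound on $f$ can close the argument. But that observation is fatal to the route you set up, not merely an inconvenience: since $\sup f=\pi/2<3\pi/4$, even the \emph{best conceivable} constant obtainable by weighting $\int\rho h''$ with $f$ is $\tfrac{1}{2\pi}\cdot\tfrac{\pi}{2}=\tfrac14$, short of the required $\tfrac38$ --- and this ignores the genuine obstruction that $\rho h''$ changes sign. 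The paper's proof channels the isoperimetric defect through $\int h'^2$ instead (via $P^2-4\pi A\le 2\pi\int_0^{2\pi}h'^2\,d\psi$), and the whole labour is to bound $\int U$ below by $\tfrac{3\pi}{4R^2}\int_0^{2\pi}h'^2\,d\psi$. Your sketch of how to get there is where the gap lies. First, the symmetrization $\psi\mapsto\psi+\tfrac{\pi}{2}$ does not act only on the weight via $f(d)+f(\tfrac{\pi}{2}-d)=\tfrac{\pi}{2}+\sin 2d$: it also transforms the factor $\rho h''$ (since $h(\psi+\tfrac{\pi}{2})=R\cos d(\psi)$, the shift intertwines $\sin d$ with $\cos d$ and flips the sign of the $d''$-term), so the integrand must first be expanded in $d,d',d''$ (five terms in the paper) and symmetrized term by term.

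Second, and more importantly, after symmetrization and one integration by parts one is left with a $d''^2$-term carrying a positive weight, a nonnegative $d'^4$-term, and a $d'^2$-term whose coefficient is \emph{everywhere negative}; no amount of further integration by parts of the type you describe removes this negative term. The missing device is the Wirtinger inequality applied to $Y=d'\sqrt{f_{\mathrm{sym}}}$ (where $f_{\mathrm{sym}}$ is the symmetrized weight, $\pi$-periodic, and $Y$ has zero mean because it is a complete derivative): $\int_0^{\pi}(Y'^2-4Y^2)\,d\psi\ge 0$ trades the $d''^2$-term for $+4f_{\mathrm{sym}}\,d'^2$, which overwhelms the negative $d'^2$-coefficient and leaves $\int U\ge\tfrac12\int d'^2(\sin 2d+3f_{\mathrm{sym}})\,d\psi\ge\tfrac{3\pi}{4}\int d'^2\,d\psi\ge\tfrac{3\pi}{4}\int\cos^2 d\;d'^2\,d\psi=\tfrac{3\pi}{4R^2}\int h'^2\,d\psi$. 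Verifying that the leftover $d'^4$-coefficient stays positive after this exchange requires a nontrivial algebraic identity (Lemma~\ref{lm:brackets}, which reduces to $(\tfrac{\pi}{4}+\tfrac12\sin 2d)^2>(d-\tfrac{\pi}{4})^2$). None of these steps is present or implied by your outline, so as written the proposal establishes only the (insufficient) preliminary bound and defers the actual proof.
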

\begin{proof}The idea of the proof is to proceed in three steps: "symmetrization", integration by parts, and Wirtinger inequality. Doing this, we pass to a new integrand, $\tilde U$, satisfying the inequality $\tilde U\geq const\  h'^2$. Moreover, this inequality enables one to estimate the integral of $\tilde U$ from below by isoperimetric defect. 
	
We write $$
U=U_1+U_2+U_3+U_4+U_5,
$$
where
\begin{align*}
&U_1=d''^2\cos^2 d\left(\frac{\pi}{2}-d+\frac{1}{2}\sin 2d\right),\\
&U_2=-2d''d'^2\sin d\cos d \left(\frac{\pi}{2}-d+\frac{1}{2}\sin 2d\right),\\
&U_3=d''\sin d\cos d \left(\frac{\pi}{2}-d+\frac{1}{2}\sin 2d\right),\\
&U_4=d'^4\sin^2 d\left(\frac{\pi}{2}-d+\frac{1}{2}\sin 2d\right),\\
&U_5=-d'^2\sin^2 d\left(\frac{\pi}{2}-d+\frac{1}{2}\sin 2d\right).
\end{align*}

{\underline{Step 1. Symmetrization.}}

We perform the change  of the integration variable by the rule $\psi\rightarrow\psi+\frac{\pi}{2}$.  By (\ref{relations}), which is the consequence of Theorem \ref{4-periodic} and Corollary \ref{relations1}, this intertwines $\sin (d)$ with $\cos(d)$ and changes the  sign of $d''$.  Denote the changed integrand by $\hat U_j$

Also denote the "symmetrized" integrand by 
$$
V_j:=U_j+\hat U_j.
$$ 
Then we have
$$
\int_0^{\pi}U_j(\psi)d\psi=\int_0^{\pi}\hat U_j(\psi)d\psi=\frac{1}{2}\int_0^{\pi}V_j(\psi) d\psi,
$$
where $V_j$ can be written as:

\begin{align*}
	&V_1=d''^2\left(\frac{\pi}{4}+(\frac{\pi}{4}-d)\cos 2d+\frac{1}{2}\sin 2d\right),\\
	&V_2=d''d'^2\sin 2d\left(2d-\frac{\pi}{2}\right),\\
	&V_3=d''\sin 2d\left(\frac{\pi}{4}-d\right),\\
	&V_4=d'^4\left(\frac{\pi}{4}+(d-\frac{\pi}{4})\cos 2d+\frac{1}{2}\sin 2d\right),\\
	&V_5=-d'^2\left(\frac{\pi}{4}+(d-\frac{\pi}{4})\cos 2d+\frac{1}{2}\sin 2d\right).
\end{align*}
	{\underline{Step 2. Integration by parts.}} 
We  apply  integration by parts for $V_2,V_3$ in order to get rid of the second derivative $d''$ (in the first power). Notice that thanks to the $\pi$-periodicity of the integrands, the  off-integration terms vanish. Thus we get new integrands $W_i,i=1,..,5$, where 
\begin{align*}
	&W_1=V_1=d''^2\left(\frac{\pi}{4}+(\frac{\pi}{4}-d)\cos 2d+\frac{1}{2}\sin 2d\right),\\
	&W_2=d'^4\left(-\frac{4}{3}\cos 2d\ (d-\frac{\pi}{4})-\frac{2}{3}\sin 2d\right),\\
	&W_3=d'^2\left(2\cos 2d\ (d-\frac{\pi}{4})+\sin 2d\right),\\
	&W_4=V_4=d'^4\left(\frac{\pi}{4}+(d-\frac{\pi}{4})\cos 2d+\frac{1}{2}\sin 2d\right),\\
	&W_5=V_5=-d'^2\left(\frac{\pi}{4}+(d-\frac{\pi}{4})\cos 2d+\frac{1}{2}\sin 2d\right).
\end{align*}
Thus we get for the integral of $U$:
\begin{equation}
	\int_{0}^{\pi}U d\psi=\int_{0}^{\pi}\sum_{i=1}^{5}U_id\psi=\frac{1}{2} \int_{0}^{\pi}\sum_{i=1}^{5}V_id\psi=\frac{1}{2} \int_{0}^{\pi}\sum_{i=1}^{5}W_id\psi.
\end{equation}
Summing $W_2+ W_4$ and $W_3+ W_5$ we rewrite using only three summands:
\begin{align*}
	&X_1:=W_1=d''^2\left(\frac{\pi}{4}+(\frac{\pi}{4}-d)\cos 2d+\frac{1}{2}\sin 2d\right),\\
	&X_2:=W_2+W_4=d'^4\left(\frac{\pi}{4}-\frac{1}{3}(d-\frac{\pi}{4})\cos 2d-\frac{1}{6}\sin 2d\right),\\
	&X_3:=W_3+W_5=d'^2\left(-\frac{\pi}{4}+(d-\frac{\pi}{4})\cos 2d+\frac{1}{2}\sin 2d\right).\\
\end{align*}
Thus we have:
\begin{equation}\label{eq:UX}
	\int_{0}^{\pi}U d\psi=\frac{1}{2} \int_{0}^{\pi}(X_1+X_2+X_3)d\psi.
\end{equation}
	{\underline{Step 3. Use of the Wirtinger inequality.}}
	 
	 Let us introduce the function of $d$ which is the multiplier in $X_1$:
	 $$
	 f(d):=\frac{\pi}{4}+\left(\frac{\pi}{4}-d\right)\cos 2d+\frac{1}{2}\sin 2d.
	 $$
	 This function is strictly positive since $d$ varies in $(0, \pi/2)$. In fact one can say more precisely $$f\in \left[\frac{1}{2}+\frac{\pi}{4},\frac{\pi}{2}\right).$$
	 Next we can write 
	 $$
	 X_2=d'^4 f_2,\quad f_2:=\frac{\pi}{4}-\frac{1}{3}\left(d-\frac{\pi}{4}\right)\cos 2d-\frac{1}{6}\sin 2d,
	 $$ and one can see that $f_2$ is positive as well. 
	 
	 Similarly for $X_3$ we have
	 $$
	 X_3= d'^2 f_3,\quad f_3:=-\frac{\pi}{4}+(d-\frac{\pi}{4})\cos 2d+\frac{1}{2}\sin 2d=(\sin 2d -f).
	 $$However the function $f_3$ is not necessarily positive.
	 In order to  bypass this difficulty, we shall use Wirtinger inequality, which we apply to the function
	 $$
	 Y:=d' \sqrt{f}.
	 $$
	 Notice, that $Y$ is $\pi$-periodic and has zero average, since it can be written as a complete derivative. Hence
	 $$
	 \int_{0}^{\pi}(Y'^2-4Y^2) d\psi\geq 0.
	 $$
	 We have the following expressions:
	 $$
	 Y'=\sqrt{f}d''+\frac{f'}{2\sqrt{f}}d'^2\Rightarrow Y'^2=f\ d''^2+f'\ d''d'^2+\frac{f'^2}{4f}d'^4.
	 $$
	 Therefore 
	 $$
	 \int_{0}^{\pi}(Y'^2-4Y^2) d\psi=\int_{0}^{\pi}(f\ d''^2+f'\ d''d'^2+\frac{f'^2}{4f}d'^4-4fd'^2)d\psi=
	 $$
	 $$
	 \int_{0}^{\pi}(f\ d''^2-\frac{f''}{3}d'^4+\frac{f'^2}{4f}d'^4-4fd'^2)d\psi:=\int_{0}^{\pi}gd\psi\geq 0,
	 $$where we performed integration by parts again.
	 
	 Thus finally we can write:
	 $$
	 X_1+X_2+X_3=g+d'^4\left(f_2+\frac{f''}{3}-\frac{f'^2}{4f}\right)+(f_3+4f)=$$
	 $$g+d'^4\left(f_2+\frac{f''}{3}-\frac{f'^2}{4f}\right)+d'^2(\sin 2d+3f).$$
	 
	 The following claim is crucial:
	 \begin{lemma}\label{lm:brackets}
Both expressions $(\sin2d+3f)$ and $\left(f_2+\frac{f''}{3}-\frac{f'^2}{4f}\right)$ of the last formula  are strictly positive.
	 \end{lemma}
 \begin{proof}
 
 1)	Since $f\in \left[\frac{1}{2}+\frac{\pi}{4},\frac{\pi}{2}\right),$ then $(3f+\sin2d)\geq \frac{3}{2}+\frac{3\pi}{4}$.
 Analyzing the behavior of the function $f$ one can claim more: 
 \begin{equation}\label{eq:f}
 	(3f+\sin 2d)\geq 3f(0)=\frac{3\pi}{2} .
 \end{equation}
 
 2) For the expression $\left(f_2+\frac{f''}{3}-\frac{f'^2}{4f}\right)$  we need to compute: $$
 f'=-2 \left(\frac{\pi}{4}-d\right)\sin 2d,
 $$
 $$
 f''=-4\left(\frac{\pi}{4}-d\right)\cos 2d+2\sin 2d.
 $$
 We substitute $f_2$ and the second derivative of $f$:
 $$
 \left(f_2+\frac{f''}{3}-\frac{f'^2}{4f}\right)=\frac{f''}{3}-\frac{f'^2}{4f}+
 \frac{\pi}{4}-\frac{1}{3}\left(d-\frac{\pi}{4}\right)\cos 2d-\frac{1}{6}\sin 2d=
 $$
 $$
 -\frac{f'^2}{4f}+\frac{1}{3}\left[-4\left(\frac{\pi}{4}-d\right)\cos 2d+2\sin 2d\right]+
 \frac{\pi}{4}-\frac{1}{3}\left(d-\frac{\pi}{4}\right)\cos 2d-\frac{1}{6}\sin 2d=
 $$
 $$
 -\frac{f'^2}{4f}+\frac{\pi}{4}+\left(d-\frac{\pi}{4}\right)\cos 2d+\frac{1}{2}\sin2d.
 $$
 Thus we need to check the sign of the expression:
 $$
 -\left(\frac{\pi}{4}-d\right)^2\sin^2 2d+\left(\frac{\pi}{4}+\left(\frac{\pi}{4}-d\right)\cos 2d+\frac{1}{2}\sin 2d\right)\left(\frac{\pi}{4}+\left(d-\frac{\pi}{4}\right)\cos 2d+\frac{1}{2}\sin2d\right)=
 $$
 $$
 -\left(d-\frac{\pi}{4}\right)^2+\frac{\pi^2}{4^2}+\frac{1}{4}\sin^2 2d+ \frac{\pi}{4}\sin 2d=$$
 $$-\left(d-\frac{\pi}{4}\right)^2+\left( \frac{\pi}{4}+\frac{1}{2}\sin 2d\right)^2.
 $$
 Notice that since $d\in(0, \frac{\pi}{2})$ then $\abs{d-\frac{\pi}{4}}<\frac{\pi}{4}$ and hence the last expression is strictly positive. This completes the proof of Lemma \ref{lm:brackets}.
 \end{proof}
We are now in position to finish the proof of Theorem \ref{thm:U-below}. Using Lemma \ref{lm:brackets} we can deduce from (\ref{eq:UX}) with the help of (\ref{eq:f})
\begin{equation}\label{eq:Uh}
	\begin{split}
		&2\int_{0}^{\pi}U d\psi\geq \int_{0}^{\pi}d'^2(\sin 2d+3f)d\psi\geq\int_{0}^{\pi} \frac{3\pi}{2} d'^2\ d\psi\geq\\
		& \int_{0}^{\pi}\frac{3\pi}{2} \cos^2d\  d'^2\  d\psi= \frac{3\pi}{2R^2} \int_{0}^{\pi} h'^2 d\psi,\\
	\end{split}
\end{equation}
where we used $h'=R\cos d\  d'$ of (\ref{relations}) in the last equality.

Now consider the isoperimetric defect $P^2-4\pi A$ for the curve $\gamma$.
We have the classical formulas:
$$P=\int_{0}^{2\pi} h d\psi,\quad A=\frac{1}{2}\int_{0}^{2\pi} (h^2-h'^2) d\psi.
$$ By Cauchy-Schwartz inequality we have: 
$$
P^2\leq 2\pi\int_{0}^{2\pi}h^2 d\psi=2\pi\left(2A+\int_{0}^{2\pi}h'^2 d\psi\right).
$$
Hence using (\ref{eq:Uh}) we get:
$$
P^2-4\pi A\leq 2\pi \int_{0}^{2\pi}h'^2 d\psi=4\pi \int_{0}^{\pi}h'^2 d\psi\leq \frac{16}{3}R^2\int_{0}^{\pi} U d\psi.
$$
This completes the proof of Theorem \ref{thm:U-below}.
\end{proof}
\section{Discussion}
It is very natural to ask if one can reconstruct elliptic billiards by sharp inequalities containing the measures $\mu(\Delta_\mathcal{B}),\mu(\mathcal{M}_\mathcal{B})$ (similarly to Theorem \ref{main1}).

It would be very interesting to extend the ideas used in this paper to other Hamiltonian systems such as Twist symplectic maps, as well as to continuous time systems.

An important goal in the study of Birkhoff billiards, as well as of general twist maps, in particular of standard-like maps, is to understand the dynamical behavior between two invariant curves. Our result can be considered as a step in this direction.
It is not clear, however, how to approach this goal for arbitrary invariant curves and also how to remove the central-symmetry assumption.


\end{document}